
\documentclass[letterpaper, 10 pt, conference]{ieeeconf}  

\IEEEoverridecommandlockouts                              

\overrideIEEEmargins                                      

\usepackage{amsmath}
\usepackage{amssymb}
\usepackage{algorithm}
\usepackage{algpseudocode}

\usepackage[noadjust]{cite}

\newcounter{tempEquationCounter}
\newcounter{thisEquationNumber}
\newenvironment{floatEq}
{\setcounter{thisEquationNumber}{\value{equation}}\addtocounter{equation}{1}
\begin{figure*}[!t]
\normalsize\setcounter{tempEquationCounter}{\value{equation}}
\setcounter{equation}{\value{thisEquationNumber}}
}
{\setcounter{equation}{\value{tempEquationCounter}}
\hrulefill\vspace*{4pt}
\end{figure*}

}

\newtheorem{theorem}{Theorem}
\newtheorem{definition}{Definition}
\newtheorem{proposition}{Proposition}



\title{\LARGE \bf
Family-Personalized Dietary Planning with Temporal Dynamics
}

\author{Pedro Hespanhol and Anil Aswani
\thanks{*This work was supported by the Hellman Fellows Program.}
\thanks{Pedro Hespanhol and Anil Aswani are with the Department of Industrial Engineering and Operations Research, University of California, Berkeley, CA 94720, USA 
        {\tt\small pedrohespanhol@berkeley.edu, aaswani@berkeley.edu}}%
}

\begin{document}

\maketitle
\thispagestyle{empty}
\pagestyle{empty}

\begin{abstract}
Poor diet and nutrition in the United States has immense financial and health costs, and development of new tools for diet planning could help families better balance their financial and temporal constraints with the quality of their diet and meals.  This paper formulates a novel model for dietary planning that incorporates two types of temporal constraints (i.e., dynamics on the perishability of raw ingredients over time, and constraints on the time required to prepare meals) by explicitly incorporating the relationship between raw ingredients and selected food recipes.  Our formulation is a diet planning model with integer-valued decision variables, and so we study the problem of designing approximation algorithms (i.e, algorithms with polynomial-time computation and guarantees on the quality of the computed solution) for our dietary model.  We develop a deterministic approximation algorithm that is based on a deterministic variant of  randomized rounding, and then evaluate our deterministic approximation algorithm with numerical experiments of dietary planning using a database of about 2000 food recipes and 150 raw ingredients.
\end{abstract}

\section{Introduction}

Poor diet and nutrition in the United States costs an estimated \$700 billion per year \cite{heidenreich2011forecasting,american2013economic} due to increases in diseases like type 2 diabetes and cardiovascular disease.  Diet quality is also important for managing body weight \cite{dpp2002,fukuoka2011}.  Given the importance of diet in maintaining good health, clinically-supervised programs \cite{dpp2003,dpp2009,fukuoka2014b} provide nutritional counseling to encourage participants to improve their diet.  

Since such counseling is costly, clinicians are studying how the sensing, computation, and communication capabilities of mobile devices can be integrated into the design of clinically-supervised programs in order to reduce costs \cite{fukuoka2011,fukuoka2014b}.  More recently, adaptive control \cite{aswani2016,mintz2017} has been used to personalize the physical activity goals and scheduling of counseling sessions in weight loss programs.  However, the control problem of designing personalized dietary plans that consider the temporal constraints imposed by ingredient purchasing and perishability has been less well-studied.


\subsection{Dietary Planning}

Diet planning was one of the first optimization problems to be formulated \cite{stigler1945}.  Existing formulations have focused on the problem of selecting a set of raw food ingredients subject to a financial budgetary constraint and bounds on the nutrients of the selected ingredients.  The earliest formulations focused on linear programs (LP's) in which continuous quantities of ingredients are selected \cite{stigler1945}.  More recent formulations (including those used for governmental policy decision-making) focus on minimizing convex functions of continuous quantities of ingredients \cite{carlson2007thrifty} or selecting discrete (i.e., integer-valued) quantities of menu items \cite{lancaster1992history}.  

However, a substantial weakness \cite{rose2007food} of these formulations is they do not include constraints for time required to prepare meals from the raw ingredients.  Furthermore, these formulations do not consider that raw ingredients leftover from a previous time period could be used to prepare meals in the current time period.  One contribution of this paper is to formulate a new model for dietary planning that includes these two types of temporal constraints: constraints for the amount of time to prepare meals, and constraints to describe how raw ingredients can be used over multiple time periods. 

Our formulation for dietary planning includes the two types of temporal constraints by explicitly including the joint choices of deciding which raw ingredients to purchase at each time period and which recipes/meals to prepare at each time period. And the goal is to choose these two sets of integer-valued quantities in order to maximize the quality of the selected meal plans.  This paper does not study how to estimate preferences, but instead assumes that preferences are already known; however, in principle inverse optimization or other learning-based approaches \cite{aswani2016,aswani2015inverse,aswani2017statistics,aswani2013provably,mintz2016bilevel} could potentially be used to estimate the preferences of meal plans with different sets of raw ingredients and meals.

\subsection{Approximation Algorithms for Integer Packing}

Because the model for designing dietary plans involves integer optimization, numerical solution requires development of algorithms that can scale to large time horizons and large numbers of instances (for each family).  Approximation algorithms for integer optimization provide some possible insights.  Let $U\in[0,1]^{d_1\times m}$, $u\in[1,\infty)^{d_1}$, and $c\in[0,1]^m$ with $\|c\|_\infty = 1$.  Then a packing integer problem (PIP) is
\begin{equation}
\label{eqn:pip}
\max \{c^\mathsf{T}x\ |\ Ux\leq u, x\in\mathbb{Z}^m_+\}.
\end{equation}

Approximation algorithms (i.e., polynomial-time computation with a bound on the suboptimality of computed solutions) based on randomized rounding \cite{raghavan1987randomized} or pessimistic estimators \cite{raghavan1988probabilistic,srinivasan1999improved} have been developed for PIP's.  Unfortunately, these algorithms cannot handle the constraints $Ax_1 \leq Bx_2$, where matrices $A,B$ have nonnegative entries and $x_1,x_2$ is a partition of the decision variable $x$, which is necessary to constrain the relationship between selected food recipes and purchased raw ingredients.  A second contribution of this paper is to develop approximation algorithms using randomized rounding and pessimistic estimators for a more general formulation with constraints of the form $Ax_1\leq Bx_2$.

\subsection{Outline}

We first present a new formulation of dietary planning with temporal dynamics.  This formulation includes the problem of selecting both raw ingredients and food recipes, and in this way allows inclusion of two temporal constraints that limit the time required to prepare meals and capture the dynamics of perishability of raw ingredients over time.  This formulation is a diet planning model that involves integer optimization, and we define an abstract optimization problem we call a generalized packing integer program (GPIP) that includes our model as a special case.  We construct a randomized approximation algorithm to solve GPIP, and then we extend this algorithm in order to construct a deterministic approximation algorithm.  The deterministic approximation algorithm provides solutions of the same quality as the randomized approximation algorithm; but whereas the randomized algorithm does not always return a feasible solution, the deterministic algorithm always returns a feasible solution.  (This is a general feature of comparison between deterministic and randomized approximation algorithms \cite{raghavan1988probabilistic,srinivasan1999improved}.)  Finally, we conclude with a simulation study to evaluate the computational scaling and solution quality of dietary plans produced by our approximation algorithms.




\section{Dietary Planning and Packing Problems}

This section first describes our approach for dietary planning with temporal constraints to model the perishability of ingredients. By changing the coefficients in this formulation, our dietary plans can be personalized to accommodate different food preferences and dietary restrictions.  Next, we describe an abstract problem that we call a generalized packing integer program (GPIP), and we briefly explain how our dietary planning problem is a special case of GPIP.

\subsection{Dietary Planning Model with Temporal Dynamics}
Let $[r] = \{1,\ldots,r\}$.  We propose performing dietary planning by solving the following optimization problem:
\begin{equation}
\label{eqn:dietmodel}
\begin{aligned}
\max_{x_n,y_n}\ &\textstyle\sum_{n=1}^{N}v^\mathsf{T}x_n + w^\mathsf{T}y_n\\
\text{s.t. } &z_n = y_n + y_{n-1} - Px_{n-1},&&\text{for }n\in[N]\\
&Px_n\leq z_n ,&&\text{for }n\in[N]\\
&Fx_n \leq h,&&\text{for }n\in[N]\\
&\textstyle\sum_{n=1}^{N}{x_{n,r}} \leq f_r, &&\text{for } r\in[R] \\ 
&t^\mathsf{T}x_n \leq T,\  b^\mathsf{T}y_n \leq B,&&\text{for }n\in[N]\\
&x_n \in \mathbb{Z}^{m}, y_n \in \mathbb{Z}^{k},z_n\in\mathbb{R}^k &&\text{for }n\in[N]
\end{aligned}
\end{equation}

The intuition of this optimization problem is as follows: The $x_{n,r} \in\mathbb{Z}$ denotes the quantity of recipe $r$ selected at time period $n$.  Similarly, the $y_{n,i}\in\mathbb{Z}$ indicates the number of packages of ingredient $i$ purchased at the $n$-th time period, while the $z_{n,i}\in\mathbb{R}$ are the portions of packages $i$ available for cooking at the $n$-th time period. The goal is to select recipes and ingredients that maximize a linear utility function.

The dynamics $z_n = y_n + y_{n-1} - Px_{n-1}$ say the portion of packages at $n$ is equal to the the number of packages purchased at $n$ plus the portion of packages remaining from the last time period $n-1$.  This model incorporates the notion of perishability of ingredients, and for simplicity we assume ingredients expire after two time periods; however, these dynamics could be suitably modified to model that different ingredients will have different time horizons of perishability.

The constraint $Px_n\leq z_n$ ensures sufficient portions of ingredient packages are available to prepare the recipes that have been selected, while $Fx_n \leq h$ ensures that appropriate nutrition (e.g., calories, vitamins, fat content) is obtained from the chosen recipes.  The $\textstyle\sum_{n=1}^{N}{x_{n,r}} \leq f_r$ inequalities place a limit on the number of times particular recipes are selected over the entire planning horizon $N$.  (Note that $f_r = 0$ ensures that no amount of recipe $r$ is selected.)  The $t^\mathsf{T}x_n \leq T$ inequality constrains the total time to prepare all the recipes at $n$ to be within the time budget $T$, and the $b^\mathsf{T}y_n \leq B$ inequality ensures that the total cost of ingredient packages purchased at $n$ is less than a financial budget $B$.

\subsection{Generalized Packing Integer Program (GPIP)}

Next we describe a general class of optimization problems.  Let $A\in[0,1]^{n\times m}$, $B\in[0,1]^{n\times k}$, $U\in[0,1]^{d_1\times m}$, $V\in[0,1]^{d_2\times k}$, $u\in[1,\infty)^{d_1}$, $v\in[1,\infty)^{d_2}$, $c_1\in[0,1]^m$, and $c_2\in[0,1]^k$ with $\|c_1\|_\infty = 1$ and $\|c_2\|_\infty = 1$.  Then we define a general packing integer problem (GPIP) as
\begin{equation}
\label{eqn:gpip}
\begin{aligned}
\max\ &c_1^\mathsf{T}x + c_2^\mathsf{T}y\\
\text{s.t. } & Ax\leq By\\
&Ux \leq u,\, Vy\leq v\\
&x\in\mathbb{Z}^m_+, y\in\mathbb{Z}^k_+
\end{aligned}
\end{equation}
This is closely related to our dietary planning model (\ref{eqn:dietmodel}) with temporal constraints, since we can replace $z_n$ in the constraint $Px_n\leq z_n$ by its dynamics $z_n = y_n + y_{n-1} - Px_{n-1}$; this leads to the GPIP structure after rearranging the terms of the resulting inequality.  Though GPIP only contains integer variables, our approximation algorithms generalize naturally to the case where some variables in GPIP are continuous.

\section{Randomized Algorithm for GPIP}

This section designs a randomized approximation algorithm to solve the GPIP problem.  To simplify the exposition, we will assume without loss of generality that the decision variables in GPIP are binary: $x\in\{0,1\}^m$ and $y\in\{0,1\}^k$.  Recall that $(\hat{x} , \hat{y})$ is a solution to the LP relaxation of (\ref{eqn:gpip}) if it solves the modified optimization problem that consists of (\ref{eqn:gpip}) but with the last constraints replaced with $x\in\mathbb{R}^m_+, y\in\mathbb{R}^k_+$.  Our general approach (similar to the approach of \cite{raghavan1987randomized} in approximating PIP) is to first solve the LP relaxation of (\ref{eqn:gpip}) and then strategically round this solution. Algorithm \ref{alg:randound} summarizes our randomized approximation algorithm.

The key technical challenge is finding an appropriate rounding strategy that allows us to bound the quality of the resulting solution.  In order to round the solution, we will construct two random vectors $X$ and $Y$ such that a single sample from these two vectors provides a good solution to GPIP.  Define $(x',y') =(\hat{x}/\alpha , \hat{y}/\gamma)$ where $p\gamma = \alpha$, $p >1$, and $\gamma > 1 $. Let $X_i\in\{0,1\}^m$ be a vector of independent Bernoulli random variables where the success probability of the $i$-th component is $x'_i$. Similarly, let $Y_i\in\{0,1\}^k$ be a vector of independent Bernoulli random variables where the success probability of the $i$-th component is $y'_i$.  

\subsection{Deviation Bounds for Single Events}

We consider undesirable events that correspond to constraints being violated by the randomized solution or to the objective function value with the rounded solution being small.  Let the first subscript on a matrix be the row, and let the second subscript be the column.  For example $A_i$ is the $i$-th row of matrix $A$, while $A_{ij}$ is the $ij$-th entry of $A$.  With this notation, undesirable events are given by
\begin{equation}
\begin{aligned}
&E_i:= ({A_i^\mathsf{T}X > B_i^\mathsf{T}Y })\\
&Q_{i}:= (U_i^\mathsf{T}X > \mu^{1}_{i}(1+ \delta^{1}_{i}))\\
&R_{i}:= (V_i^\mathsf{T}Y > \mu^{2}_{i}(1+ \delta^{2}_{i}))\\
&E_{n+1}:= (c^\mathsf{T}_{1}X + c^\mathsf{T}_{2}Y < \mu^{n+1}(1-\delta^{n+1}))
\end{aligned}
\end{equation}
where for a constant $\beta$ we have that
\begin{equation}
\begin{aligned}
&\mu^{0}_{i}= \mathbb{E}(A_i^\mathsf{T}X)\\
&\mu^{1}_{i} =\mathbb{E}(U_i^\mathsf{T}X) & \quad&\delta^{1}_{i} =u_i/\mu^{1}_{i}  - 1\\
&\mu^{2}_{i} =\mathbb{E}(V_i^\mathsf{T}Y) & \quad&\delta^{2}_{i} =v_i/\mu^{2}_{i}  - 1\\
&\mu^{n+1} =\mathbb{E}(c^\mathsf{T}_{1}X + c^\mathsf{T}_{2}Y)&\quad&\delta^{n+1} = \frac{c^\mathsf{T}_{1}\hat{x} + c^\mathsf{T}_{2}\hat{y}}{\alpha\beta\mu^{n+1}}
\end{aligned}
\end{equation}
Without loss of generality, we assume $\mu^{0}_{i}$, $\mu^{1}_{i}$, $\mu^{2}_{i}$ are strictly positive because we can eliminate any decision variables $x_i$ with $x_i' = 0$ or $y_i$ with $y_i' =0$ by setting them to zero and then considering GPIP with those variables eliminated.

Let $(x_n)_{\min} = \min\{x_{n,i}\ |\ x_{n,i} > 0\}$.  Our first step is to quantify the likelihood of undesirable events occurring.  
\begin{proposition}
\label{prop:singbound}
We have the following probability bounds
\begin{equation}
\begin{aligned}
&\mathbb{P}(Q_i) \leq G(u_i/\alpha , \alpha - 1), &&\forall i \in [d_1]\\
&\mathbb{P}(R_i) \leq G(v_i/\gamma , \gamma - 1), &&\forall i \in [d_2]\\
&\mathbb{P}(E_{n+1}) \leq H(z^{*}/\alpha, 1- 1/\beta)\\
&\mathbb{P}(E_i) \leq \mathbb{P}(B^\mathsf{T}_{i}Y = 0)\cdot\mathbb{P}(A^\mathsf{T}_{i}X > 0) + \\
&\hspace{0.5cm}\mathbb{P}(B^\mathsf{T}_{i}Y > 0)\cdot G( (B_i)_{\min}/\alpha , \alpha-1 ), &&\forall i\in[n]
\end{aligned}
\end{equation}
where we have that $G(\mu,\delta) = (\exp(\delta)/(1+\delta)^{(1+\delta)})^\mu$ and $H(\mu,\delta)  = \exp(-\mu\delta^{2}/2)$.
\end{proposition}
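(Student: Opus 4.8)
The plan is to recognize all four estimates as tail bounds for weighted sums of independent Bernoulli random variables and to attack them with the two standard multiplicative Chernoff bounds, namely the upper-tail bound $\mathbb{P}(S > (1+\delta)\mu) \le G(\mu,\delta)$ and the lower-tail bound $\mathbb{P}(S < (1-\delta)\mu) \le H(\mu,\delta)$. Each of $U_i^\mathsf{T}X$, $V_i^\mathsf{T}Y$, $A_i^\mathsf{T}X$, and $c_1^\mathsf{T}X + c_2^\mathsf{T}Y$ is a nonnegatively weighted sum of the independent coordinates of $X$ and $Y$ with weights in $[0,1]$; both bounds hold verbatim in this weighted setting through the moment-generating-function argument together with the convexity estimate $e^{\theta a} - 1 \le a(e^\theta - 1)$ for $a \in [0,1]$, so I would record this fact once at the outset. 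I would also record a monotonicity lemma for $G$: for a fixed threshold $M$, the map $\mu \mapsto G(\mu, M/\mu - 1)$ is increasing on $(0,M)$, so that any sum whose mean is at most $M/\alpha$ and whose threshold is $M$ has upper-tail probability at most $G(M/\alpha, \alpha - 1)$.

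With these in hand the bounds for $Q_i$ and $R_i$ are immediate. The event $Q_i$ rewrites as $U_i^\mathsf{T}X > u_i$, since $\mu_i^1(1+\delta_i^1) = u_i$ by the definition of $\delta_i^1$, and the Chernoff upper-tail bound gives $\mathbb{P}(Q_i) \le G(\mu_i^1,\delta_i^1)$. Feasibility of the LP relaxation supplies $U\hat{x} \le u$, hence $\mu_i^1 = U_i^\mathsf{T}\hat{x}/\alpha \le u_i/\alpha$; applying the monotonicity lemma with threshold $M = u_i$ upgrades the estimate to $G(u_i/\alpha, \alpha - 1)$. The bound for $R_i$ is identical after exchanging $(U,u,\alpha)$ for $(V,v,\gamma)$ and using $V\hat{y} \le v$.

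For the objective event $E_{n+1}$ I would apply the lower-tail bound to $c_1^\mathsf{T}X + c_2^\mathsf{T}Y$, whose mean is $\mu^{n+1}$, obtaining $\mathbb{P}(E_{n+1}) \le H(\mu^{n+1},\delta^{n+1})$. To reach the stated form I would use that $\mu^{n+1} = c_1^\mathsf{T}\hat{x}/\alpha + c_2^\mathsf{T}\hat{y}/\gamma \le c_1^\mathsf{T}\hat{x} + c_2^\mathsf{T}\hat{y} = z^*$ because $\alpha,\gamma \ge 1$; substituting the definition of $\delta^{n+1}$ then gives $\mu^{n+1}(\delta^{n+1})^2 = (z^*)^2/(\alpha^2\beta^2\mu^{n+1}) \ge z^*/(\alpha^2\beta^2)$, and the prescribed relation between $\alpha$ and $\beta$ makes this at least $(z^*/\alpha)(1-1/\beta)^2$. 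Since $H(\mu,\delta) = \exp(-\mu\delta^2/2)$ is decreasing in the product $\mu\delta^2$, this yields $\mathbb{P}(E_{n+1}) \le H(z^*/\alpha, 1-1/\beta)$.

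The genuinely new case, and the one I expect to be the main obstacle, is $E_i$, since here the constraint compares the two random quantities $A_i^\mathsf{T}X$ and $B_i^\mathsf{T}Y$ rather than a random quantity against a fixed bound; this coupling is exactly what prevents a direct reduction to the PIP analysis. I would handle it by conditioning on $B_i^\mathsf{T}Y$ and splitting into the disjoint events $\{B_i^\mathsf{T}Y = 0\}$ and $\{B_i^\mathsf{T}Y > 0\}$, using the independence of $X$ and $Y$ throughout. On $\{B_i^\mathsf{T}Y = 0\}$ the constraint is violated precisely when $A_i^\mathsf{T}X > 0$, contributing $\mathbb{P}(B_i^\mathsf{T}Y = 0)\,\mathbb{P}(A_i^\mathsf{T}X > 0)$. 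On $\{B_i^\mathsf{T}Y > 0\}$ every realization satisfies $B_i^\mathsf{T}Y \ge (B_i)_{\min}$, so a violation forces $A_i^\mathsf{T}X > (B_i)_{\min}$; bounding $\mathbb{P}(A_i^\mathsf{T}X > (B_i)_{\min})$ by the Chernoff upper-tail bound and invoking the monotonicity lemma with threshold $M = (B_i)_{\min}$ produces the factor $G((B_i)_{\min}/\alpha, \alpha - 1)$, and multiplying by $\mathbb{P}(B_i^\mathsf{T}Y > 0)$ gives the second term. The delicate point to get right is the passage from the true mean of $A_i^\mathsf{T}X$ to the reference mean $(B_i)_{\min}/\alpha$ in the monotonicity step, which is where LP feasibility of $(\hat{x},\hat{y})$ and the boundedness of the entries of $A$ and $B$ must be used carefully.
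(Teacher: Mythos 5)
Your strategy coincides with the paper's: Chernoff--Hoeffding tail bounds for weighted sums of independent Bernoullis, combined with the monotonicity of $\mu \mapsto G(\mu, M/\mu-1)$ on $(0,M)$ (your monotonicity lemma is precisely the inequality the paper imports from Srinivasan), plus the same decomposition of $E_i$ by conditioning on $\{B_i^\mathsf{T}Y=0\}$ versus $\{B_i^\mathsf{T}Y>0\}$. Your treatment of $Q_i$ and $R_i$ is correct and complete. However, two of your steps fail as written. For $E_{n+1}$, you bound $\mu^{n+1}\le z^*$ to get $\mu^{n+1}(\delta^{n+1})^2 \ge z^*/(\alpha^2\beta^2)$, and then assert that this is at least $(z^*/\alpha)(1-1/\beta)^2$. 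That assertion is equivalent to $\alpha(1-\beta)^2\le 1$, i.e.\ $\alpha\le 3/2$ for the paper's choice $\beta = 1-\sqrt{2}/\sqrt{3}$, and it is false in the only regime that matters, where $\alpha = \Omega\bigl(m/k + (n+d_1)^{1/([B,u])_{\min}}\bigr)$ is large. The repair is to use the sharper estimate $\mu^{n+1} = c_1^\mathsf{T}\hat{x}/\alpha + c_2^\mathsf{T}\hat{y}/\gamma \le z^*/\gamma$ (valid because $\alpha = p\gamma > \gamma$), which turns the requirement into $p(1-\beta)^2\le 1$ --- a condition on the constant $p$, not on $\alpha$.

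For $E_i$ the gap is the one you yourself flag but do not close, and the ingredients you name are not sufficient to close it. Invoking your monotonicity lemma with threshold $M = (B_i)_{\min}$ requires the premise $\mathbb{E}[A_i^\mathsf{T}X] = A_i^\mathsf{T}\hat{x}/\alpha \le (B_i)_{\min}/\alpha$, i.e.\ $A_i^\mathsf{T}\hat{x}\le (B_i)_{\min}$. LP feasibility only gives $A_i^\mathsf{T}\hat{x}\le B_i^\mathsf{T}\hat{y}$, and $B_i^\mathsf{T}\hat{y}$ can be as large as $\|B_i\|_1$, which in general greatly exceeds $(B_i)_{\min}$; boundedness of the entries of $A$ and $B$ does not rescue this. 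The missing idea --- and what the paper actually does --- is to change the algorithm's scaling: replace $\alpha$ by $\alpha' = \alpha\,\|A_i\|_1/(B_i)_{\min}$ (a constant-factor inflation absorbed into the approximation ratio), so that $\mathbb{E}[A_i^\mathsf{T}X]\le \|A_i\|_1/\alpha' = (B_i)_{\min}/\alpha$, and hence $\delta_i^0 = (B_i)_{\min}/\mu_i^0 - 1 \ge \alpha-1 > 0$; only after this modification do the Chernoff bound and the monotonicity argument yield the factor $G((B_i)_{\min}/\alpha,\alpha-1)$. Without inflating $\alpha$, the claimed bound simply does not follow from the tools you list.
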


\begin{proof}
The first three inequalities follow by combining the Chernoff-Hoeffding bound \cite{hoeffding1963probability,raghavan1988probabilistic} with the inequalities from \cite{srinivasan1999improved} that:
\begin{equation}
\begin{aligned}
& G(\mu^{1}_{i},\delta^{1}_{i}) \leq G(u_i/\alpha, \alpha - 1), &\forall i \in [d_1]\\
& G(\mu^{2}_{i},\delta^{2}_{i}) \leq G(v_i/\gamma, \gamma - 1), &\forall i \in [d_2]\\
& H(\mu^{n+1},\delta^{n+1}) \leq H(z^{*}/\alpha, 1- 1/\beta).  
\end{aligned}
\end{equation}

Proving the fourth inequality requires additional work.  We condition on whether or not the random variable $B_{i}^\mathsf{T}Y$ is equal to the zero.  If $B_{i}^\mathsf{T}Y>0$ and $A^\mathsf{T}_{i}X > B^\mathsf{T}_{i}Y$, then $A_i^\mathsf{T}X$ is bigger than $(B_i)_{\min}$. Hence we get the bound
\begin{multline}
\mathbb{P}(E_i) \leq \mathbb{P}(B_{i}^\mathsf{T}Y=0)\cdot\mathbb{P}(A^\mathsf{T}_{i}X > 0) + \\
\mathbb{P}(B_{i}^\mathsf{T}Y > 0)\cdot\mathbb{P}(A^\mathsf{T}_{i}X > (B_i)_{\min} ).
\end{multline}
Next define $\delta^{0}_{i} = (B_i)_{\min}/( \mu^{0}_{i}) - 1$. If $(B_i)_{\min} \geq \mathbb{E}(B_i^\mathsf{T}Y)$, then $\delta^{0}_{i} \geq 0$ and we can use the Chernoff-Hoeffding bound \cite{raghavan1988probabilistic,hoeffding1963probability}. On the other hand, we need to ensure that $(B_i)_{\min} \geq \mu^{0}_{i}$ in order to get a deviation of the random variable $A_i^\mathsf{T}X$ above its mean. This requires $\alpha$ be multiplied by the constant factor of $\|A_i\|_1/(B_i)_{\min}$.  And so we have:
\begin{multline}
 \mathbb{P}(A^\mathsf{T}_{i}X > (B_i)_{\min} ) \leq \\
\mathbb{P}(A_i^\mathsf{T}X >\mu^{0}_{i}(1+\delta^{0}_{i} )) \leq G( \mu^{0}_{i} ,\delta^{0}_{i} )
\end{multline}
Since $\mu^{0}_{i} \leq (B_i)_{\min} / p$ and $\alpha > 1$, the above bounds from \cite{srinivasan1999improved} give the fourth inequality.
\end{proof}

\subsection{Deviation Bound for Union of Events}

\begin{algorithm}[t]
\begin{algorithmic}[1]
\Require Constants $\alpha,\gamma$
\Require LP Relaxation Solution $\hat{x},\hat{y}$
\State choose $x_i = 1$ (resp., $x_i = 0$) with probability $\hat{x}_i/\alpha$ (resp., with probability $1-\hat{x}_i/\alpha$)
\State choose $y_i = 1$ (resp., $y_i = 0$) with probability $\hat{y}_i/\gamma$ (resp., with probability $1-\hat{y}_i/\gamma$)
\State \Return $(x,y)$
\end{algorithmic}
\caption{Randomized Rounding Algorithm for GPIP\label{alg:randound}}
\end{algorithm}

To prove that Algorithm \ref{alg:randound} is an approximation algorithm, we need to next quantify the likelihood of the above described undesirable events occurring.  The following proposition provides needed bounds for unions of undesirable events:

\begin{proposition}
\label{prop:unbnd}
If $\alpha=\Omega(m/k + (n+d_1)^{1/([B, u])_{\min}})$, $\beta =1-\sqrt{2}/\sqrt{3}$, and $\gamma = \Omega(d_2^{1/(v)_{\min}})$; then we have that
\begin{equation}
\begin{aligned}
&\textstyle\mathbb{P}(\bigcup_{i=1}^{d_1}Q_{i}) < 1/5&\quad&\textstyle\mathbb{P}(\bigcup_{i=1}^{d_2}R_{i}) < 1/5\\
&\textstyle\mathbb{P}(\bigcup_{i=1}^{n}E_{i}) < 1/5&\quad&\textstyle\mathbb{P}(E_{n+1}) < 2/5
\end{aligned}
\end{equation}
whenever $(c^\mathsf{T}_{1}\hat{x} + c^\mathsf{T}_{2}\hat{y})/\alpha > 5$.
\end{proposition}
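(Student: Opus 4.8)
The plan is to derive all four bounds by a union-bound reduction to the single-event estimates of Proposition~\ref{prop:singbound}, with the constants $\alpha$, $\gamma$, and $\beta$ chosen precisely so that each family of events contributes less than its allotted slack ($1/5$, or $2/5$ for the lone event $E_{n+1}$). For the three families of the form $\bigcup_i Q_i$, $\bigcup_i R_i$, and $\bigcup_i E_i$ I would write $\mathbb{P}(\bigcup_i \cdot)\le\sum_i\mathbb{P}(\cdot)$, bound each summand by its worst case, and then verify that the prescribed growth rates of $\alpha$ and $\gamma$ push the resulting products below the thresholds; the single event $E_{n+1}$ needs no union bound at all.

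For the $Q_i$ family I would start from $\mathbb{P}(\bigcup_{i=1}^{d_1}Q_i)\le d_1\max_i G(u_i/\alpha,\alpha-1)$. Substituting $G(\mu,\delta)=(e^\delta/(1+\delta)^{1+\delta})^\mu$ with $\mu=u_i/\alpha$ and $1+\delta=\alpha$ and taking logarithms shows that the exponent behaves like $-u_i\ln\alpha$ for large $\alpha$; since $u_i\ge 1$, the product drops below $1/5$ as soon as $\ln\alpha\gtrsim\ln(5d_1)$, and the prescribed growth $\alpha=\Omega((n+d_1)^{1/([B,u])_{\min}})$ (whose exponent is at least one because $([B,u])_{\min}\le 1$) is more than enough. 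The $R_i$ bound is entirely symmetric, replacing $(u,\alpha,d_1)$ by $(v,\gamma,d_2)$ and invoking $\gamma=\Omega(d_2^{1/(v)_{\min}})$.

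The $E_i$ family is the crux, and is where I expect the genuine difficulty. Proposition~\ref{prop:singbound} splits $\mathbb{P}(E_i)$ into two pieces. The second piece, $\mathbb{P}(B_i^\mathsf{T}Y>0)\cdot G((B_i)_{\min}/\alpha,\alpha-1)$, I would control exactly as in the $Q_i$ case, since the exponent of $G$ now carries $(B_i)_{\min}\ge([B,u])_{\min}$, so the $(n+d_1)^{1/([B,u])_{\min}}$ term in $\alpha$ suppresses all $n$ such summands. The first piece, $\mathbb{P}(B_i^\mathsf{T}Y=0)\cdot\mathbb{P}(A_i^\mathsf{T}X>0)$, is the genuinely new obstacle, because it couples the two independently rounded vectors. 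Here I would bound $\mathbb{P}(A_i^\mathsf{T}X>0)$ by a union bound over the $x$-coordinates (which injects a count on the order of $m$) and $\mathbb{P}(B_i^\mathsf{T}Y=0)=\prod_{j:B_{ij}>0}(1-y_j')$ geometrically via $1-t\le e^{-t}$, and then tie the two factors together through the LP feasibility $A_i^\mathsf{T}\hat{x}\le B_i^\mathsf{T}\hat{y}$, so that whenever the $x$-side can fire, the $y$-side already carries enough mass to make $B_i^\mathsf{T}Y=0$ improbable. It is precisely the mismatch between the $m$-fold union bound on the $X$-side and the $k$-fold mass available on the $Y$-side that forces the $m/k$ term in $\alpha=\Omega(m/k+\cdots)$; summing the products over the $n$ constraints and combining with the standing threshold then yields $\mathbb{P}(\bigcup_iE_i)<1/5$. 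Carrying out this coupling tightly, rather than paying a crude $nm/\alpha$ that would demand the far larger $\alpha\gtrsim nm$, is the step I expect to require the most care.

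Finally, $E_{n+1}$ follows directly from $\mathbb{P}(E_{n+1})\le H(z^*/\alpha,1-1/\beta)=\exp(-(z^*/\alpha)(1-1/\beta)^2/2)$. Writing $z^*=c_1^\mathsf{T}\hat{x}+c_2^\mathsf{T}\hat{y}$ and invoking the hypothesis $z^*/\alpha>5$, the specific value $\beta=1-\sqrt{2}/\sqrt{3}$ is calibrated so that the exponent is large enough to drive $H(\cdot)$ below $2/5$. Collecting the four estimates completes the argument.
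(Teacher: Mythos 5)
Your overall architecture matches the paper's: split the failure probability into the four families, feed in the single-event bounds of Proposition \ref{prop:singbound}, and allocate slack $1/5$, $1/5$, $1/10+1/10$, $2/5$. Three of your four estimates go through essentially as you describe. For $\bigcup_i Q_i$ and $\bigcup_i R_i$, your plain union bound plus $G(u_i/\alpha,\alpha-1)\le (e/\alpha)^{u_i}$ with $u_i\ge 1$ is sound (the paper instead uses the FKG inequality to get the slightly sharper $1-\prod_i(1-\mathbb{P}(Q_i))$, but this only changes constants), and the same argument handles the $E_i^2$ piece, i.e.\ the event $\{A_i^\mathsf{T}X>B_i^\mathsf{T}Y\}\cap\{B_i^\mathsf{T}Y>0\}$, through the exponent $(B_i)_{\min}$. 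For $E_{n+1}$, invoking the third inequality of Proposition \ref{prop:singbound} as a black box and checking $\exp(-\tfrac{5}{2}(1-1/\beta)^2)<2/5$ is legitimate; the paper instead reproves this step with Chebyshev's inequality, but either route clears the threshold.

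The genuine gap is exactly where you flagged difficulty: the piece $E_i^1=\{A_i^\mathsf{T}X>0\}\cap\{B_i^\mathsf{T}Y=0\}$. You propose to couple the two factors via LP feasibility $A_i^\mathsf{T}\hat{x}\le B_i^\mathsf{T}\hat{y}$, ``so that whenever the $x$-side can fire, the $y$-side already carries enough mass.'' That inference is false in general: feasibility constrains \emph{weighted} sums, while the two probabilities depend only on supports and unweighted mass, namely $\mathbb{P}(A_i^\mathsf{T}X>0)=1-\prod_{j:A_{ij}>0}(1-x'_j)$ and $\mathbb{P}(B_i^\mathsf{T}Y=0)=\prod_{j:B_{ij}>0}(1-y'_j)$. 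Because the positive entries of $A_i$ may be arbitrarily small, one can have $A_i^\mathsf{T}\hat{x}$ (and hence the required $B_i^\mathsf{T}\hat{y}$) essentially zero while every coordinate $\hat{x}_j$ on the support of $A_i$ is large; then $\mathbb{P}(A_i^\mathsf{T}X>0)$ is of order $\min\{1,m\|x'\|_\infty\}$, $\mathbb{P}(B_i^\mathsf{T}Y=0)$ stays near one, and your estimate collapses to exactly the crude $nm/\alpha$ bound you wanted to avoid --- note this can happen even with $(B_i)_{\min}=1$, so the $(n+d_1)^{1/([B,u])_{\min}}$ term does not rescue it. The paper never couples the two sides: it bounds the factors separately, $\mathbb{P}(B_i^\mathsf{T}Y=0)\le K_5^k$ with $K_5=\max_j(1-y'_j)$ treated as a constant strictly less than one (this is where the WLOG elimination of coordinates with $y'_j=0$ enters, along with an implicit full-support assumption on the rows of $B$), and $\mathbb{P}(A_i^\mathsf{T}X>0)\le 1-(1-\|x'\|_\infty/\alpha)^m$; the $m/k$ term then emerges from the product over the $n$ constraints via the expansion $K_5^{k/m}=O(1+k(K_5-1)/m)$. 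So your intuition that an $m$-versus-$k$ mismatch produces the $m/k$ term is right, but the mechanism that delivers it is geometric decay of $\mathbb{P}(B_i^\mathsf{T}Y=0)$ in $k$, not LP feasibility; as written, your proof of $\mathbb{P}(\bigcup_{i=1}^{n}E_i)<1/5$ is incomplete.
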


\begin{proof}
Let $x(S)$ be such that $x(S)_{i} = 1$ if and only if $i \in S$, and let $y(T)$ be such that $y(T)_{j} = 1$ if and only if $i \notin T$. Next define the sets
\begin{equation}
\begin{aligned}
& \mathcal{F}^0_i=\{S \subseteq [n], T \subseteq [k] : A_i^\mathsf{T}x(S) \leq B_i^\mathsf{T}y(T) \} \\
& \mathcal{F}^1_i=\{S \subseteq [n] : U_i^\mathsf{T}x(S) \leq\mu^{1}_{i}(1+ \delta_i^1)\}\\
& \mathcal{F}^2_i=\{T \subseteq [k] : V_i^\mathsf{T}y(S) \leq\mu^{2}_{i}(1+ \delta_i^2)\}
\end{aligned}
\end{equation}
The $\mathcal{F}^{0}_i$ and $\mathcal{F}^{1}_i$ are monotone decreasing, while the $\mathcal{F}_i^{2}$ are monotone increasing. (A set $\mathcal{F}$ is monotone increasing if $S\subseteq T$ with $S\in\mathcal{F}$ implies $T\in\mathcal{F}$, and $\mathcal{F}$ is monotone decreasing if $S\subseteq T$ with $T\in\mathcal{F}$ implies $S\in\mathcal{F}$.) 

Hence the Fortuin-Kasteleyn-Ginibre (FKG) inequality \cite{fortuin1971correlation} gives
\begin{equation}
\begin{aligned}
&\textstyle \mathbb{P}(\bigcup_{i=1}^{d_1}Q_{i}) \leq 1- \prod_{i=1}^{d_1}(1-\mathbb{P}(Q_{i})) \\
&\textstyle \mathbb{P}(\bigcup_{i=1}^{d_2}R_{i}) \leq 1- \prod_{i=1}^{d_2}(1-\mathbb{P}(R_{i}))
\end{aligned}
\end{equation}
Proposition \ref{prop:singbound} implies we have $\mathbb{P}(\bigcup_{i=1}^{d_1}Q_{i}) < 1/5$ whenever $1-(1-\exp((u)_{\min} - (u)_{\min}\log(\alpha)))^{d_1} <  1/5$.  If $\alpha \geq 3$, then $\log(\alpha) - 1 >0$ and there exists $k'>0$ such that
\begin{multline}
1-\exp((u)_{\min} - (u)_{\min}\log(\alpha))) \geq \\
\exp(-k'\exp(-(u)_{\min}\log(\alpha)-1)).
\end{multline}
So we require the two inequalities:
\begin{equation}
\begin{aligned}
& \exp(k'\exp(-(u)_{\min}\log(\alpha-1)) > \sqrt[d_1]{4/5} \\
& -k'\exp((u)_{\min})\alpha^{-(u)_{\min}} > \log(\sqrt[d_1]{4/5})
\end{aligned}
\end{equation}
Let $K_2 =k'\exp((u)_{\min})$ and $K_3=-K_2/\log(4/5)$, and note that $K_2, K_3>0$. We have that
\begin{equation}
\begin{aligned}
 \alpha > -(K_2/\log{\sqrt[d_1]{4/5}})^{1/(u)_{\min}},
\end{aligned}
\end{equation}
 and so we require that $\alpha > (K_3d_1)^{1/(u)_{\min}}$ in order to ensure $\mathbb{P}(\bigcup_{i=1}^{d_1}Q_{i}) < 1/5$.  The same argument shows $\mathbb{P}(\bigcup_{i=1}^{d_2}R_{i}) < 1/5$ when we have that $\gamma > p(K_4d_2)^{1/(v)_{\min}}$ for a constant $K_4 >0$.

We next study $\mathbb{P}(\bigcup_{i=1}^{n}E_{i})$.  Note we can decompose these events as: $E_i = E_i^{1} \bigcup E_i^{2}$, where
\begin{equation}
\begin{aligned}
& E_i^{1} := ({A^\mathsf{T}_{i}X > B^\mathsf{T}_{i}Y } \wedge  B^\mathsf{T}_{i}Y=0) \\
& E_i^{2} := ({A^\mathsf{T}_{i}X > B^\mathsf{T}_{i}Y } \wedge  B^\mathsf{T}_{i}Y>0)
\end{aligned}
\end{equation}
The union bound gives
\begin{equation}
\textstyle \mathbb{P}(\bigcup_{i=1}^{n}E_i)  \leq \mathbb{P}(\bigcup_{i=1}^{n}E_i^{1}) + \mathbb{P}(\bigcup_{i=1}^{n}E_i^{1}),
\end{equation}
and so $\mathbb{P}(\bigcup_{i=1}^{n}E_{i}) < 1/5$ whenever $\mathbb{P}(\bigcup_{i=1}^{n}E_i^{1}) < 1/10$ and $\mathbb{P}(\bigcup_{i=1}^{n}E_i^{2}) < 1/10$.

Applying the FKG inequality means we need
\begin{equation}
\begin{aligned}
&\textstyle 1-\prod_{i=1}^{n}(1-\mathbb{P}(E_i^{1})) <  1/10\\
&\textstyle 1-\prod_{i=1}^{n}(1-\mathbb{P}(E_i^{2})) <  1/10.
\end{aligned}
\end{equation}
But note
\begin{multline}
\mathbb{P}(E_i^{1}) = \mathbb{P}(B^\mathsf{T}_{i}Y=0)\cdot\mathbb{P}(A^\mathsf{T}_{i}X > 0) = \\
\textstyle\mathbb{P}(B^\mathsf{T}_{i}Y=0)\cdot(1-\prod_{j : A_{ij} >0}(1 - x'_{j}/\alpha))
\end{multline}
For any $\gamma > 1$ we have
\begin{equation}
\textstyle \max_i\mathbb{P}(B^\mathsf{T}_{i}Y=0) = \max_i(\prod_{j : B_{ij} > 0} (1-y'_j)) \leq K_5^k
\end{equation}
where $K_5 := \max_j (1-y'_j)$ is a constant.  We also have
\begin{equation}
\textstyle (1-\prod_{j : A_{ij} >0}(1 - x'_{j}/\alpha)) \leq 1 - (1 - \|x'\|_\infty/\alpha)^{m}
\end{equation}
So $\mathbb{P}(E_i^1) \leq K_5^k\cdot (1 - (1 - \|x'\|_\infty/\alpha)^{m})$, and we require
\begin{multline}
\textstyle 1-\prod_{i=1}^{n}(1-\mathbb{P}(E_i^{1})) \leq \\
\textstyle 1 - (1-K_5^k\cdot(1 -(1 - \|x'\|_\infty/\alpha)^{m}))^n <1/10
\end{multline}
or equivalently that
\begin{equation}
\begin{aligned}
 (1 - \|x'\|_\infty/\alpha)^{m} > (K_5^k -1 + \sqrt[n]{9/10}) / K_5^k.
\end{aligned}
\end{equation}
But $K_5^k -1 < 0$ and $\|x'\|_\infty < 1$ by construction, and so we want
\begin{multline}
\alpha >\|x'\|_\infty / (1 - (K_5^k -1 + \sqrt[n]{9/10})^{1/m} / K_5^{k/m} ) \geq \\
(1 -(K_5^k -1 + \sqrt[n]{9/10})^{1/m}/K_5^{k/m} )^{-1} = O(m/k)
\end{multline}
where we have used the expansion
\begin{equation}
K_5^{(1/m)} = O(1 + (K_5 -1)/m).
\end{equation}
Next define
\begin{equation}
K_7 = \min_{i}(\mathbb{P}(B^\mathsf{T}_{i}Y=0)),
\end{equation}
and note that we have
\begin{equation}
 \mathbb{P}(E_i^{2}) \leq  (1-K_{7})\cdot G( (B_{i})_{\min}/\alpha , \alpha-1 )
\end{equation}
by Proposition \ref{prop:singbound}.  So we get
\begin{equation}
 \mathbb{P}(E_i^2) \leq (1-K_{7})\cdot\exp((B_i)_{\min} - (B_i)_{\min}\log(\alpha)),
\end{equation}
and we require
\begin{multline}
\textstyle 1-\prod_{i=1}^{n}(1-\mathbb{P}(E_i^2))  \leq \\
-(1- (1-K_{7})\exp((B_i)_{\min} - (B_i)_{\min}\log(\alpha)))^{n} <  1/10.
\end{multline}
Since $\alpha = p\gamma$, for sufficiently large fixed $k''$ we have
\begin{multline}
(1-K_{6})\exp((B_i)_{\min} - (B_i)_{\min}\log(\alpha))  \geq \\
\exp(-k''\exp(-(B_i)_{\min}\cdot(\log(\alpha)-1))) 
\end{multline}
for a constant $K_6$.  This means:
\begin{multline}
\textstyle 1-\prod_{i=1}^{n}(1-\mathbb{P}(E_i^{2}))  \leq \\
1 - (\exp(-k''\exp(-(B_i)_{\min}(\log(\alpha)-1))))^{n} < 1/10 
\end{multline}
 holds when $p  > (K_8n)^{1/(B_i)_{\min}}$ for another constant $K_8$.  This choice implies $\mathbb{P}(\bigcup_{i=1}^{n}E_{i})< 1/5$ since $p\gamma=\alpha$.

We lastly examine $\mathbb{P}(E_{n+1})$.  Using Chebyshev's inequality and some algebra gives
\begin{multline}
\mathbb{P}(c^\mathsf{T}_{1}X + c^\mathsf{T}_{2}Y < (c^\mathsf{T}_{1}\hat{x} + c^\mathsf{T}_{2}\hat{y})/\alpha\beta) \leq \\
\mathbb{P}(|c^\mathsf{T}_{1}X + c^\mathsf{T}_{2}Y-(c^\mathsf{T}_{1}\hat{x} + c^\mathsf{T}_{2}\hat{y})/\gamma| > \sqrt{2}\cdot(c^\mathsf{T}_{1}\hat{x} + \\
c^\mathsf{T}_{2}\hat{y})/\sqrt{3}\gamma) \leq 3/2\cdot(c^\mathsf{T}_{1}\hat{x} + c^\mathsf{T}_{2}\hat{y})/\gamma < 3/10
\end{multline}
when we have $\alpha > \gamma$ and $(c^\mathsf{T}_{1}\hat{x} + c^\mathsf{T}_{2}\hat{y})/\alpha > 5$.
This implies
\begin{equation}
\mathbb{P}(E_{n+1}) = \mathbb{P}(c^\mathsf{T}_{1}X + c^\mathsf{T}_{2}Y < (c^\mathsf{T}_{1}\hat{x} + c^\mathsf{T}_{2}\hat{y})/\alpha\beta) < 2/5,
\end{equation}
which is the desired bound that was to be shown.
\end{proof}

We can now prove our first theorem, which follows by combining the above results.
\begin{theorem}
\label{thm:randround}
The parameters $\alpha,\beta,\gamma$ from Proposition \ref{prop:unbnd} are such that a feasible solution to GPIP generated by Algorithm \ref{alg:randound} is an $O(m/k + (n+d_1+d_2)^{1/([B, u, v])_{\min}})$-approximation.
\end{theorem}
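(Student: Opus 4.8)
The plan is to assemble the single-constraint bounds of Proposition~\ref{prop:singbound} and the union bounds of Proposition~\ref{prop:unbnd} into one application of the union bound over \emph{all} undesirable events, and then read off the approximation ratio from the definition of the event $E_{n+1}$.

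First I would collect the four families of undesirable events and form their union
\[
\mathcal{B} := \Bigl(\bigcup_{i=1}^{d_1} Q_i\Bigr) \cup \Bigl(\bigcup_{i=1}^{d_2} R_i\Bigr) \cup \Bigl(\bigcup_{i=1}^{n} E_i\Bigr) \cup E_{n+1}.
\]
A plain union bound together with the four estimates of Proposition~\ref{prop:unbnd} gives
\[
\mathbb{P}(\mathcal{B}) \le \mathbb{P}\Bigl(\bigcup_{i} Q_i\Bigr) + \mathbb{P}\Bigl(\bigcup_{i} R_i\Bigr) + \mathbb{P}\Bigl(\bigcup_{i} E_i\Bigr) + \mathbb{P}(E_{n+1}) < \tfrac15 + \tfrac15 + \tfrac15 + \tfrac25 = 1,
\]
so the strictness of the four bounds forces $\mathbb{P}(\mathcal{B}^{\mathsf{c}}) > 0$. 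The point is that on the complement $\mathcal{B}^{\mathsf{c}}$ the rounded vector $(X,Y)$ produced by Algorithm~\ref{alg:randound} simultaneously (i) violates none of the $Q_i$, hence $UX \le u$; (ii) violates none of the $R_i$, hence $VY \le v$; (iii) violates none of the $E_i$, hence $AX \le BY$; and (iv) avoids $E_{n+1}$, hence $c_1^\mathsf{T}X + c_2^\mathsf{T}Y \ge (c_1^\mathsf{T}\hat{x} + c_2^\mathsf{T}\hat{y})/(\alpha\beta)$. The first three are exactly GPIP feasibility, so with positive probability Algorithm~\ref{alg:randound} returns a feasible point, and any such point that also realizes (iv) is the approximate solution the theorem describes.

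Next I would convert the objective bound in (iv) into an approximation ratio. Since $(\hat{x},\hat{y})$ solves the LP relaxation of~(\ref{eqn:gpip}), its value dominates the integer optimum, which I denote $\mathrm{OPT}$, so $c_1^\mathsf{T}\hat{x} + c_2^\mathsf{T}\hat{y} \ge \mathrm{OPT}$ and (iv) yields $c_1^\mathsf{T}X + c_2^\mathsf{T}Y \ge \mathrm{OPT}/(\alpha\beta)$; that is, a feasible rounded solution avoiding $E_{n+1}$ is an $(\alpha\beta)$-approximation. It then remains to express $\alpha\beta$ in the stated order. Because $\beta = 1-\sqrt{2}/\sqrt{3}$ is an absolute constant, $\alpha\beta = \Theta(\alpha)$, and the admissible choice of $\alpha$ is governed by the three requirements of Proposition~\ref{prop:unbnd}: $\alpha = \Omega(m/k + (n+d_1)^{1/([B,u])_{\min}})$ together with $\gamma = \Omega(d_2^{1/(v)_{\min}})$ and $\alpha = p\gamma \ge \gamma$. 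Taking $\alpha$ at the smallest admissible order gives $\alpha = O\bigl(m/k + (n+d_1)^{1/([B,u])_{\min}} + d_2^{1/(v)_{\min}}\bigr)$.

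The last and only delicate step is the asymptotic bookkeeping that collapses these two exponentiated terms into the single expression in the theorem. Writing $([B,u,v])_{\min} = \min\{([B,u])_{\min}, (v)_{\min}\}$, one has $1/([B,u,v])_{\min} = \max\{1/([B,u])_{\min}, 1/(v)_{\min}\}$, and since the bases satisfy $n+d_1+d_2 \ge \max\{n+d_1, d_2\} \ge 1$, monotonicity gives both $(n+d_1)^{1/([B,u])_{\min}} \le (n+d_1+d_2)^{1/([B,u,v])_{\min}}$ and $d_2^{1/(v)_{\min}} \le (n+d_1+d_2)^{1/([B,u,v])_{\min}}$. Hence $\alpha\beta = O\bigl(m/k + (n+d_1+d_2)^{1/([B,u,v])_{\min}}\bigr)$, which is the claimed ratio. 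I expect this comparison of bases and exponents to be the main obstacle, since it is the point where three separate parameter constraints must be shown to be dominated by one clean bound; the probabilistic content, by contrast, is already packaged in Propositions~\ref{prop:singbound} and~\ref{prop:unbnd}.
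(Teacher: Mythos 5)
Your overall strategy coincides with the paper's: one union bound over all undesirable events, the four estimates of Proposition~\ref{prop:unbnd} summing to $\tfrac15+\tfrac15+\tfrac15+\tfrac25=1$, positive probability of a rounding that is simultaneously feasible and has objective value at least $(c_1^\mathsf{T}\hat{x}+c_2^\mathsf{T}\hat{y})/(\alpha\beta)$, and then the translation of $\alpha\beta$ into the stated ratio. Your final bookkeeping step (absorbing $(n+d_1)^{1/([B,u])_{\min}}$ and $d_2^{1/(v)_{\min}}$ into $(n+d_1+d_2)^{1/([B,u,v])_{\min}}$ via monotonicity in base and exponent) is correct and is exactly what the paper leaves implicit.

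There is, however, one genuine gap: Proposition~\ref{prop:unbnd} is not unconditional. Its four probability bounds are stated to hold only \emph{whenever} $(c_1^\mathsf{T}\hat{x}+c_2^\mathsf{T}\hat{y})/\alpha > 5$, and you invoke them without ever checking or assuming this hypothesis. The hypothesis is not cosmetic: the bound $\mathbb{P}(E_{n+1})<2/5$ comes from a Chebyshev-type argument that only closes when the scaled LP value is large, so in the regime $(c_1^\mathsf{T}\hat{x}+c_2^\mathsf{T}\hat{y})/\alpha \le 5$ your chain of inequalities simply has no support and the positive-probability conclusion cannot be drawn this way. The paper's proof opens with precisely the case split your proposal is missing: if $(c_1^\mathsf{T}\hat{x}+c_2^\mathsf{T}\hat{y})/\alpha \le 5$, then the integer optimum is at most the LP value, which is at most $5\alpha$, so the claimed ratio holds for trivial reasons in that regime (the optimum itself is $O(\alpha)$, hence $O\bigl(m/k+(n+d_1+d_2)^{1/([B,u,v])_{\min}}\bigr)$); only in the complementary case does the paper run the union-bound argument you wrote. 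Adding this two-line case distinction closes the gap; everything else in your proposal matches the paper's proof.
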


\begin{proof}
If $(c^\mathsf{T}_{1}\hat{x} + c^\mathsf{T}_{2}\hat{y})/\alpha \leq 5$, then Algorithm \ref{alg:randound}  gives an $O((n+d_1+d_2)^{1/([B, u, v])_{\min}})$-approximation.  So we focus on the case $(c^\mathsf{T}_{1}\hat{x} + c^\mathsf{T}_{2}\hat{y})/\alpha > 5$.  Then by Proposition \ref{prop:unbnd} and the union bound we have:
\begin{multline}
\textstyle\mathbb{P}\Big(\bigcup_{i=1}^{n+1}E_i\bigcup_{i=1}^{d_1}Q_i\bigcup_{i=1}^{d_2}R_i\Big) \leq \mathbb{P}(\bigcup_{i=1}^nE_i) +\textstyle  \\
\textstyle\mathbb{P}(E_{n+1})+ \mathbb{P}(\bigcup_{i=1}^{d_1}Q_i) + \mathbb{P}(\bigcup_{i=1}^{d_2}R_i)< 1
\end{multline}
This means a feasible solution generated by Algorithm \ref{alg:randound} is an $O(m/k + (n+d_1+d_2)^{1/([B, u, v])_{\min}})$-approximation.
\end{proof}

\section{Deterministic Algorithm for GPIP}

We have constructed a randomized approximation algorithm for GPIP, but randomized algorithms are not guaranteed to produce a feasible solution \cite{raghavan1988probabilistic,srinivasan1999improved}.  In this section, we construct a deterministic approximation algorithm that always returns a feasible solution to GPIP.  Let $X$ be a vector of independent Bernoulli random variables, where $p_i$ is the probability $X_i$ equals one.  If $L$ is a set with $\mathbb{P}(X \in L) < 1$, then we can find an $x$ such that $x \notin L$ using  Algorithm \ref{alg:detound1} that uses a \emph{pessimistic estimator} to upper bound the probability of undesirable events.  Our algorithm is more aggressive than the approach from \cite{raghavan1988probabilistic,srinivasan1999improved}, which rounds in order to reduce the value of the pessimistic estimator; however, correctness of our algorithm follows from the same proof in \cite{raghavan1988probabilistic}.  

\begin{definition}[Pessimistic Estimator \cite{raghavan1988probabilistic}]
\label{def:pesest}
The function $U: [0,1]^\ell \rightarrow \mathbb{R}_+$ is a pessimistic estimator for set $L$ and Bernoulli random variables $(X_1,\ldots,X_\ell)$ with probability $(p_1,\ldots,p_\ell)$ of being one, if it satisfies the three properties:
\begin{enumerate}
\item $U(p_1,\ldots,p_\ell) < 1$;

\item $U(w_1,\ldots,w_i,p_{i+1}, \ldots, p_l) \geq$\\ $\min\{U(w_1,\ldots,w_i,k,p_{i+2},\ldots,p_\ell)\ |\ k\in\{0,1\}\}$;
\item $U(w_1,\ldots,w_i,p_{i+1},\ldots,p_\ell) \geq\mathbb{P}[X \in L | X_k = w$ for $k \in[i]]$, for all $i \in \{0,\ldots,\ell\}$ and $w \in \{0,1\}^{\ell}$.
\end{enumerate}
%
\end{definition}

\begin{algorithm}[t]
\begin{algorithmic}[1]
\Require Pessimistic Estimator $U$
\For {$i = 1,\ldots,\ell$}
\If {$U(x_1,\ldots,x_{i-1},1,p_{i+1},\ldots,p_\ell) < 1$}
\State choose $x_i = 1$
\Else
\State choose $x_i = 0$
\EndIf
\EndFor
\State \Return $x$
\end{algorithmic}
\caption{Deterministic Rounding Algorithm for Pessimistic Estimator \label{alg:detound1}}
\end{algorithm}

\subsection {Constructing a Pessimistic Estimator for GPIP}

To construct a deterministic approximation algorithm for GPIP, we need to build a pessimistic estimator for $\mathbb{P}\Big(\bigcup_{i=1}^{n+1}E_i\bigcup_{i=1}^{d_1}Q_i\bigcup_{i=1}^{d_2}R_i\Big) $.  Natural candidate functions are upper bounds to the probabilities of these events.  Before we provide these bounds, we define some notation: If $w$ is a binary vector, then $X(j) = w$ indicates we fix the first $j$ components of the vector $X$ to match $w$. We will use $u,Y$ similarly. With this convention, consider the following functions that are used as pessimistic estimators for each individual probability:
\begin{equation}
\begin{aligned}
h^{1}_{i}(j,w) &=  \mathbb{E}[(1+\delta^{1}_{i})^{U_i^\mathsf{T}X-\mu^{1}_{i}(1+\delta^{1}_{i})} | X(j) = w]\\
f^{1}_{i}(j,w) &=  \mathbb{E}[(1+\delta^{1}_{i})^{U_i^\mathsf{T}X-\mu^{1}_{i}(1+\delta^{1}_{i})} | X(j+1) = (w,0)]\\
g^{1}_{i}(j,w) &=  \mathbb{E}[(1+\delta^{1}_{i})^{U_i^\mathsf{T}X-\mu^{1}_{i}(1+\delta^{1}_{i})} | X(j+1) = (w,1)]
\end{aligned}
\end{equation}
and
\begin{equation}
\begin{aligned}
h^{2}_{i}(j,u) &=  \mathbb{E}[(1+\delta^{2}_{i})^{V^\mathsf{T}_{i}Y-\mu^{2}_{i}(1+\delta^{2}_{i})} | Y(j) = u]\\
f^{2}_{i}(j,u) &=  \mathbb{E}[(1+\delta^{2}_{i})^{V^\mathsf{T}_{i}Y-\mu^{2}_{i}(1+\delta^{2}_{i})} | Y(j+1) = (u,0)]\\
g^{2}_{i}(j,u) &=  \mathbb{E}[(1+\delta^{2}_{i})^{V^\mathsf{T}_{i}Y-\mu^{2}_{i}(1+\delta^{2}_{i})} | Y(j+1) = (u,1)]
\end{aligned}
\end{equation}
And we define the terms in (\ref{equ:floatedEquation}).  But the functions $h^{0},h^{1}, h^{2}, f^{1}, f^{2},g^{1},g^{2},f_{x}^{0},f_{y}^{0}, g_{x}^{0},g_{y}^{1}$ can be bigger than one, and so we define:
\begin{equation}
\begin{aligned}
& h'^{0} = \min\{ h^{0},1\}\\
&h'^{1}=\min\{h^{1},1\}&& h'^{2}=\min\{h^{2},1\} \\
& f'^{1}=\min\{f^{1},1\}&& f'^{2}=\min\{f^{2},1\}\\
& g'^{1}=\min\{g^{1},1\}&& g'^{2}=\min\{g^{2},1\} \\
& f'^{0}_{x}=\min\{f'^{0}_{x},1\}&& f'^{0}_{y}=\min\{f^{0}_{y},1\}\\
& g'^{0}_{x}=\min\{g_{x}^{0},1\}&\quad& g'^{0}_{y}=\min\{g^{0}_{y},1\}.
\end{aligned}
\end{equation}
With these definitions, we next construct (and prove its correctness) a pessimistic estimator in Theorem \ref{thm:pesest}.

 \begin{floatEq}
\begin{equation}
\begin{aligned}
&h^{0}_{i}(j,u,w,l) = \textstyle(\prod_{k=1}(1 - Y_{k}|Y(u) = l))\times\textstyle( \prod_{k=1}(1 -X_{k}|X(j) = w))+(1-\prod_{k=1}(1 - X_{k}|X(j) = w))\times\\&\hspace{3cm}\mathbb{E}[(1+\delta^{0}_{i})^{A_i^\mathsf{T}X-\mu^{0}_{i}(1+\delta^{0}_{i})} | X(j) = w]\\
&f^{0}_{xi}(j,u,w,l) = \textstyle(\prod_{k=1}(1 - Y_{k}|Y(u) = l))\times\textstyle( \prod_{k=1}(1 -X_{k}|X(j+1) = (w,0))) + (1-\prod_{k=1}(1 - Y_{k}|Y(u) = l))\times\\
&\hspace{3cm}\mathbb{E}[(1+\delta^{0}_{i})^{A^\mathsf{T}_{i}X-\mu^{0}_{i}(1+\delta^{0}_{i})} | X(j+1) = (w,0)]\\
&g^{0}_{xi}(j,u,w,l) \textstyle=(\prod_{k=1}(1 - Y_{k}|Y(u) = l))\times( \prod_{k=1}(1 -X_{k}|X(j+1) = (w,1))) + (1-\prod_{k=1}(1 - Y_{k}|Y(u) = l))\times\\
&\hspace{3cm}\mathbb{E}[(1+\delta^{0}_{i})^{A^\mathsf{T}_{i}X-\mu^{0}_{i}(1+\delta^{0}_{i})} | X(j+1) = (w,1)] \\
&f^{0}_{yi}(j,u,w,l) = \textstyle(\prod_{k=1}(1 - Y_{k}|Y(u+1) = (l,0)))( \prod_{k=1}(1 -X_{k}|X(j) = w)) + \\
&\textstyle\hspace{3cm}(1-\prod_{k=1}(1 - Y_{k}|Y(u+1) = (l,0)))\times\mathbb{E}[(1+\delta^{0}_{i})^{A^\mathsf{T}_{i}X-\mu^{0}_{i}(1+\delta^{0}_{i})} | X(j) = w]\\
&g^{0}_{yi}(j,u,w,l) = \textstyle(\prod_{k=1}(1 - Y_{k}|Y(u+1) = (l,1)))( \prod_{k=1}(1 -X_{k}|X(j) = w)) + \\
&\textstyle\hspace{3cm}(1-\prod_{k=1}(1 - Y_{k}|Y(u+1) = (l+1)))\times\mathbb{E}[(1+\delta^{0}_{i})^{A^\mathsf{T}_{i}X-\mu^{0}_{i}(1+\delta^{0}_{i})} | X(j) = w]\\
&h^{0}_{n+1}(j,u,w,l) = \mathbb{E}[(1-\delta_{n+1})^{(c^\mathsf{T}_{1}X + c^\mathsf{T}_{2}Y)-\mu_{n+1}(1-\delta_{n+1})} | X(j) = w, Y(u) = l]
\label{equ:floatedEquation}
\end{aligned}
\end{equation}
\end{floatEq}

\begin{theorem}
\label{thm:pesest}
The parameters $\alpha,\beta,\gamma$ from Proposition \ref{prop:unbnd} are such that
\begin{multline}
\label{eqn:pesest}
\textstyle 3-  \prod_{i=1}^{n}(1-h'^{0}_{i}(j,u,w,l)) - \prod_{i=1}^{d_1}(1-h'^{1}_{i}(j,w)) + \\
\textstyle-\prod_{i=1}^{d_2}(1-h'^{2}_{i}(u,l)) +h^{0}_{n+1}(j,u,w,l)
\end{multline}
is a pessimistic estimator for GPIP of the probability of $\mathbb{P}\Big(\bigcup_{i=1}^{n+1}E_i\bigcup_{i=1}^{d_1}Q_i\bigcup_{i=1}^{d_2}R_i\Big)$.  Furthermore, Algorithm \ref{alg:detound1} is an $O(m/k + (n+d_1+d_2)^{1/([B, u, v])_{\min}})$-approximation.
\end{theorem}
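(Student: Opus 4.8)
The plan is to verify that the function in (\ref{eqn:pesest}) satisfies the three requirements of Definition \ref{def:pesest} and then to read off the approximation ratio exactly as in the proof of Theorem \ref{thm:randround}. I would organize the three checks around the decomposition of (\ref{eqn:pesest}) into the four pieces $[1-\prod_{i=1}^{n}(1-{h'}^{0}_i)]$, $[1-\prod_{i=1}^{d_1}(1-{h'}^{1}_i)]$, $[1-\prod_{i=1}^{d_2}(1-{h'}^{2}_i)]$, and $h^{0}_{n+1}$, which correspond respectively to the families $\{E_i\}$, $\{Q_i\}$, $\{R_i\}$ and the single objective event $E_{n+1}$.

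For the domination property (item 3 of Definition \ref{def:pesest}), I would first show each $h$-function is a conditional Chernoff bound gotten by applying Markov's inequality to the same exponential moment used in Proposition \ref{prop:singbound}; for instance $h^{1}_i(j,w)\ge\mathbb{P}(Q_i\mid X(j)=w)$ because $(1+\delta^{1}_i)^{U_i^\mathsf{T}X-\mu^{1}_i(1+\delta^{1}_i)}\ge\mathbf{1}[Q_i]$. Truncating at one is harmless since probabilities lie in $[0,1]$, so ${h'}^{1}_i\ge\mathbb{P}(Q_i\mid\cdot)$ as well. Combining these individual bounds with the FKG step already used in Proposition \ref{prop:unbnd} gives $1-\prod_i(1-{h'}^{1}_i)\ge\mathbb{P}(\bigcup_iQ_i\mid\cdot)$, and likewise for $\{R_i\}$ and $\{E_i\}$; adding the four pieces through the union bound across families yields domination of $\mathbb{P}\big(\bigcup E_i\cup\bigcup Q_i\cup\bigcup R_i\mid\cdot\big)$. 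For the starting value (item 1), I would evaluate (\ref{eqn:pesest}) at the LP-induced probabilities $(x',y')$, where each piece collapses to the truncated union bound already controlled in Proposition \ref{prop:unbnd}; with the stated $\alpha,\gamma$ and $\beta=1-\sqrt{2}/\sqrt{3}$ the three constraint pieces are each below $1/5$ and the objective piece below $2/5$, so the estimator begins strictly below $1$.

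I expect item 2 (one rounding step cannot increase the estimator) to be the main obstacle. For the objective term there is no difficulty: $h^{0}_{n+1}$ is a single conditional expectation, hence affine in each bit, so its value at a fractional coordinate is the exact convex combination of its two integral values and is therefore at least their minimum. The constraint pieces, however, are not affine: because each $E_i,Q_i,R_i$ family shares the same Bernoulli coordinates across all of its rows, the FKG product $\prod_i(1-{h'}_i)$ is a genuinely nonlinear function of the bit being fixed, and the clean martingale identity that makes the classical estimator of \cite{raghavan1988probabilistic,srinivasan1999improved} collapse to a convex combination is unavailable. The route I would take is monotonicity: since $A,B,U,V$ have nonnegative entries and every constraint base satisfies $(1+\delta_i)\ge1$, each $h^{0}_i,h^{1}_i,h^{2}_i$ is nondecreasing in each coordinate, so every constraint product is monotone in the bit, whereas the base $(1-\delta_{n+1})\le1$ makes $h^{0}_{n+1}$ monotone in the opposite direction. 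The delicate point---and precisely where the ``more aggressive'' rounding of Algorithm \ref{alg:detound1} must be justified---is to combine these oppositely-monotone, non-affine pieces and still certify $\min_k U(\ldots,k,\ldots)\le U(\ldots,p,\ldots)$ at every step.

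Finally, granting the three properties, correctness follows by the induction of \cite{raghavan1988probabilistic}: the estimator is held below $1$ throughout, so the returned integral $(x,y)$ obeys $\mathbb{P}[\,\text{undesirable}\mid\text{all bits fixed}\,]\le U<1$, which for a deterministic point forces that no undesirable event occurs. Avoiding $Q_i,R_i,E_i$ delivers feasibility, and avoiding $E_{n+1}$ forces $c_1^\mathsf{T}x+c_2^\mathsf{T}y\ge(c_1^\mathsf{T}\hat{x}+c_2^\mathsf{T}\hat{y})/(\alpha\beta)$ by the definition of $E_{n+1}$. With the $\alpha,\beta,\gamma$ of Proposition \ref{prop:unbnd} this is the same $O(m/k+(n+d_1+d_2)^{1/([B,u,v])_{\min}})$ guarantee as in Theorem \ref{thm:randround}, and the small-objective regime $(c_1^\mathsf{T}\hat{x}+c_2^\mathsf{T}\hat{y})/\alpha\le5$ is dispatched exactly as there.
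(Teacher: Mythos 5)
Your treatment of properties 1 and 3 of Definition~\ref{def:pesest} is sound and agrees with the paper (conditional Chernoff bounds, truncation, FKG applied to the conditional product measure, union bound across the four families), and you are also right that the objective term $h^{0}_{n+1}$ is exactly affine in each bit and so poses no difficulty. But you leave property 2 unproven for the constraint pieces --- the step you yourself call ``the delicate point'' --- and that step is essentially the entire mathematical content of the theorem; the paper's proof is devoted almost exclusively to it. As written, your proposal has a genuine gap exactly where the new idea is required.

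Here is what closes it, and why the monotonicity you flag as an obstacle is in fact the enabling property. First, for each individual row the paper establishes the truncated per-row inequality $(1-p_{j+1})f'_{i} + p_{j+1}g'_{i} \leq h'_{i}$ together with the ordering $0\leq f'_{i}\leq g'_{i}\leq 1$: the untruncated identity $h_{i}=(1-p_{j+1})f_{i}+p_{j+1}g_{i}$ is exact by conditional expectation, and a short case analysis ($h_i<1$ with $g_i\leq 1$, $h_i<1$ with $g_i>1$, or $h_i\geq 1$) shows the truncation at one preserves the inequality; the ordering $f'_i\leq g'_i$ holds because all matrix entries are nonnegative. Second --- the missing step --- the paper shows by induction that this same-ordering propagates through the FKG products:
\begin{equation*}
\textstyle\prod_{i=1}^{n}(1-h'_{i}) \;\leq\; \prod_{i=1}^{n}\bigl(1-(1-p)f'_{i}-p\,g'_{i}\bigr) \;\leq\; (1-p)\prod_{i=1}^{n}(1-f'_{i}) \;+\; p\prod_{i=1}^{n}(1-g'_{i}),
\end{equation*}
where the inductive step reduces, after distributing, to the nonnegativity of $p(1-p)\,(g'_{n}-f'_{n})\bigl(\prod_{i=1}^{n-1}(1-f'_{i})-\prod_{i=1}^{n-1}(1-g'_{i})\bigr)$, which holds precisely because \emph{every} row in a family is ordered the same way ($f'_{i}\leq g'_{i}$ for all $i$, hence $\prod_i(1-f'_{i})\geq\prod_i(1-g'_{i})$). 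This is a Chebyshev-type same-ordering correlation inequality: since fixing a bit to one worsens all constraint estimators in a family simultaneously, the product stays above its convex combination. Negating these products in $U$ and adding the exactly-affine objective term then gives $U(\ldots,p_{j+1},\ldots)\geq (1-p_{j+1})U(\ldots,0,\ldots)+p_{j+1}U(\ldots,1,\ldots)\geq \min_{k\in\{0,1\}}U(\ldots,k,\ldots)$, which is property 2. One further point your proposal omits: the paper fixes the rounding order (all $Y$ coordinates before all $X$ coordinates), which is what makes the mixed estimators $h'^{0}_{i}$ --- the ones containing the $\mathbb{P}(B_{i}^{\mathsf{T}}Y=0)$ terms --- amenable to the same per-row case analysis; without specifying an order, the two-variable events $E_i$ cannot be handled by this argument.
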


\begin{proof}
The proof follows the approach of \cite{srinivasan1999improved}, though one major difference is that we fix the order in which we round the variables. More specifically, we first round the Y variables, and then we round the X variables.  The first step is to prove some relations between the functions defined above:
\begin{equation}
\begin{aligned}
& 0\leq f'^{1}(j,w) \leq g'^{1}(j,w) \leq 1 \\
& (1-p_{j+1})f'^{1}(j,w) + p_{j+1}g'^{1}(j,w) \leq h'^{1}(j,w)
\end{aligned}
\end{equation}
These relations also hold for $h'^{2},f'^{2},g'^{2}$.  To see why they hold we omit the superscript and proceed to prove both relations.  Note that by fixing $f'(j,w)$ we see that it has the same value of $ g'(j,w)$ except for the element $x_{j+1}$ (or $y_{j+1}$) which has a nonnegative coefficient on either function. So by setting it equal to one we do not increase the function value. The first relation follows from this observation. To see the second relation, note that $h(j,w) = (1-p_{j})f(j,w) + p_{j}g(j,w)$ by definition of conditional expectation. But if $h_{i} < 1$ and $ g_{i} \leq 1$, then $f_i < 1$; so the second relation follows. If  $h_{i} < 1$ and $ g_{i} >1$, then $f_i < 1$; so the second relation follows as well. Lastly if $h_{i}\geq 1$, so $h'_{i}=g'_{i}=1$, and the second relation follows again. 

For $h'^{0}, f'^{0}_{x}, f'^{0}_{y},g'^{0}_{x},g'^{0}_{y}$ note that
\begin{multline}
h'^{0}(j,u,w,l) = (1-q_{j})\cdot f'^{0}_{y}(j,u,w,l) + \\q_{j}\cdot f'^{0}_{y}(j,u,w,l).
\end{multline}
But when we condition on $Y$, then the terms associated with $X$ remain constant. So the relationship holds directly from conditional probability.  Next we condition on $X$. Observe there are two cases.  The first case is all $B_i^\mathsf{T}Y$ are equal to zero: Then
\begin{equation}
h'^{0}(j,u) \geq (1-p_{j})f'^{0}_{x}(j',w,u) + p_{j}f'^{0}_{x}(j',w,u)
\end{equation}
since the only term present is $( \prod_{k=1}(1 -X_{k}|X(j) = w))$. So the expression follows from the definition of conditional probabilities.  The second case is at least one $Y$ is equal to one; this case is similar to $h'^{1}, f'^{1}, g'^{1}$. So we have
\begin{multline}
h'^{0}(j,u,w,l) \geq (1-p_{j})f'^{0}_{x}(j,u) + p_{j}f'^{0}_{x}(j,u) \\
= (1-q_{j})f'^{0}_{y}(j,u) + q_{j}f'^{0}_{y}(j,u)
\end{multline} 
and $0\leq f'^{0}_{x}(j,u,w,l) \leq g'^{0}{x}(j,u,w,l) \leq 1$ for all $(j,u)$.

Next we prove (\ref{eqn:pesest}) is a pessimistic estimator.  Our $\alpha,\beta,\gamma$ choice ensures the unconditional estimator is less than one, and that it upper bounds the failure probability. So the result follows if we prove the first two properties in Definition \ref{def:pesest}.  We show this by proving
\begin{multline}
U(w_1,\ldots,w_i,p_{i+1}, \ldots, p_\ell) \geq \\
p_{i+1}\cdot U(w_1,\ldots,w_i,0,p_{i+2},\ldots,p_\ell) \\
+ (1-p_{i+1})\cdot U(w_1,\ldots,w_i,1,p_{i+2},\ldots,p_\ell)
\end{multline} 
conditioned on $X$, and by proving
\begin{multline}
U(w_1,\ldots,w_i,p_{i+1}, \ldots, p_\ell) \geq \\
q_{i+1}\cdot U(w_1,\ldots,w_i,0,p_{i+2},\ldots,p_\ell) \\
+ (1-q_{i+1})\cdot U(w_1,\ldots,w_i,1,p_{i+2},\ldots,p_\ell) 
\end{multline} 
conditioned on $Y$. Let $p_{j}$ and $q_{j'}$ be the probability $X_{j} , Y_{j'}$ equal one, respectively. Then:
\begin{multline}
\mathbb{E}[(1-\delta_{n+1})^{\omega(X,Y)} | X(j) = w, Y(u) = \ell] = \\
(1-p_{j+1})\cdot\mathbb{E}[(1-\delta_{n+1})^{\omega(X,Y)} | X(j) = (w,0), Y(u) = \ell] + \\
p_{j+1}\cdot\mathbb{E}[(1-\delta_{n+1})^{\omega(X,Y)} | X(j) = (w,1), Y(u) = \ell] = \\
(1-q_{j+1})\cdot\mathbb{E}[(1-\delta_{n+1})^{\omega(X,Y)} | X(j) = w, Y(u,1) = \ell] + \\
q_{j+1}\cdot\mathbb{E}[(1-\delta_{n+1})^{\omega(X,Y)} | X(j) = w, Y(u,1) = \ell]  
\end{multline} 
where $\omega(X,Y) = (c^\mathsf{T}_{1}X + c^\mathsf{T}_{2}Y)-\mu_{n+1}(1-\delta_{n+1})$.


There are now two cases, where we either fix $X_{j+1}$ or fix $Y_{j+1}$.  The proofs are identical and so we consider the first case where we fix $X_{j+1}$, which gives that $\prod_{i=1}^{d_2}(1-h'^{2}_{i}(j,w,u))$ remains the same (since it only depends on the values of $Y$. Thus it is sufficient to show that
\begin{multline}
\textstyle\prod_{i=1}^{d_1}(1-h^{1}_{i}(j,w)) \leq \\
\textstyle(1-p_{j+1})\prod_{i=1}^{d_1}(1-f'^{1}_{i}(j,w)) + \\
\textstyle p_{j+1}\prod_{i=1}^{d_1}(1-g'^{1}_{i}(j,w))
\end{multline}
and
\begin{multline}
\textstyle\prod_{i=1}^{n}(1-h'^{0}_{i}(j,u,w,l)) \leq \\
\textstyle(1-p_{j+1})\prod_{i=1}^{n}(1-f'^{0}_{x,i}(j,u,w,l)) + \\
\textstyle p_{j+1}\prod_{i=1}^{b}(1-g'^{0}_{x,i}(j,u,w,l)).
\end{multline}
This holds trivially for $n=1$ by our choice of $\alpha,\gamma$.  Let $p_{j}= p$, and we omit the subscript for brevity.  Next we proceed by induction to show (51) (The proof for (50) is analogous and is therefore ommitted):  Assuming the above holds for $n-1$, we need to show that
\begin{multline}
\textstyle\prod_{i=1}^{n-1}(1-h'^{0}_{i}) \leq  \prod_{i=1}^{n-1}(1- (1-p)f'^{0}_{i} - pg'^{0}_{i} ) \leq \\
\textstyle\prod_{i=1}^{n-1}(1-p)f'^{0}_{i} +\prod_{i=1}^{n-1} pg'^{0}_{i} = \\
\textstyle(\prod_{i=1}^{n-1}(1-p)f'^{0}_{i} +\prod_{i=1}^{n-1} pg'^{0}_{i})(1-(1-p)f'^{0}_{n} - pg'^{0}_{n}) \leq \\
\textstyle(\prod_{i=1}^{n}(1-p)f'^{0}_{i} +\prod_{i=1}^{n} pg'^{0}_{i})
\end{multline}
Distributing and simplifying we get that
\begin{multline}
\textstyle p(1-p)(g'^{0}_{n} - f'^{0}_{n})(\prod_{i=1}^{n-1}(1-f'^{0}_{i}) + \\
\textstyle\prod_{i=1}^{n-1} (1-g'^{0}_{i})) \geq 0.
\end{multline}
The same holds for $h^{1}_{i}$. This shows
\begin{multline}
U(x(j),y) \geq(1-p_{j+1})U(X(j+1),Y| X_{j+1}=0) + \\
p_{j}U(X(j+1), Y| X_{j+1}=1)\
\end{multline}
and so
\begin{multline}
U(x(j),y) \geq \min\{U(X(j+1),Y | X_{j+1}=0), \\
U(X(j+1), Y| X_{j+1}=1)\}.
\end{multline}
This implies $U$ is a pessimistic estimator.
\end{proof}

\section{Computational Results}

This section describes the results of computational experiments in which Algorithm \ref{alg:detound1} was used to solve the optimization problem (\ref{eqn:dietmodel}) corresponding to our dietary planning formulation.  We used a database constructed from a subset of the Recipes Wikia \cite{recipes2016} consisting of about 2000 food recipes prepared from 130 raw ingredients. We conducted a series of experiments based on three databases sizes: small (20 recipes and 10 ingredients), medium (about 300 recipes and 50 ingredients), and large (the full database).  We also varied the horizon $N$ to be between one to ten weeks.

We conducted 100 repetitions where the food preferences $v,w$ in (\ref{eqn:dietmodel}) were randomly chosen, and Table \ref{table:r3} shows the average optimality gap -- with respect to the LP relaxation of (\ref{eqn:dietmodel}) -- of solutions computed using Algorithm \ref{alg:detound1}; standard deviation is in parenthesis.  Table \ref{table:r4} shows the computation time needed to calculate solutions using Algorithm \ref{alg:detound1}.  The average solution time is in seconds, and the standard deviation is in parenthesis.  Our experiments were conducted on a 2.2Ghz laptop computer with 8.00Gb of RAM and using Gurobi 7.0 \cite{gurobi} to compute the LP's for our algorithm.


\begin{table}[t]
\caption{Optimality Gap of Approximate Solutions} 
\centering 
\begin{tabular}{c c c c} 
\hline
\hline 
&\multicolumn{3}{c}{Instance Size}\\
Horizon ($N$) & Small & Medium & Large \\ 
\hline 
1 & 35\% (11\%) & 52\% (9\%) & 55\% (9\%) \\
3 & 36\% (\hphantom{0}7\%) & 48\% (5\%) & 57\% (5\%) \\
5 & 35\% (\hphantom{0}5\%) & 51\% (4\%) & 68\% (6\%) \\
7 & 35\% (\hphantom{0}5\%) & 53\% (4\%) & 71\% (5\%) \\
10 & 34\% (\hphantom{0}4\%) & 56\% (4\%) & 74\% (3\%) \\
\hline 
\hline
\end{tabular}
\label{table:r3}
\end{table}

\begin{table}[t]
\caption{Computation Time of Approximate Solutions} 
\centering 
\begin{tabular}{c c c c} 
\hline
\hline 
&\multicolumn{3}{c}{Instance Size}\\
Horizon ($N$) & Small & Medium & Large \\ 
\hline 
1 & 0.01 (0.01) & 0.02 (0.01) & 0.09 (0.02) \\
3 & 0.03 (0.01) & 0.14 (0.03) & 0.56 (0.08) \\
5 & 0.06 (0.01) & 0.27 (0.03) & 1.34 (0.14) \\
7 & 0.09 (0.01) & 0.56 (0.13) & 2.49 (0.15) \\
10 & 0.19 (0.01) & 0.95 (0.14) & 4.38 (0.47)\\
\hline 
\hline
\end{tabular}
\label{table:r4}
\end{table}

\section{Conclusion}

We gave a novel model formulation for dietary planning with temporal constraints, abstracted this formulation into a generalized packing integer program (GPIP), and constructed a deterministic approximation algorithm to solve GPIP.  Simulations with a real dietary database were used to evaluate our algorithm.  Interesting future directions include improving our algorithm by either tightening the bounds of the pessimistic estimators or by exploiting specific ordering properties in the rounding that occurs in our algorithm.

\addtolength{\textheight}{-12cm}   



\bibliographystyle{IEEEtran}
\bibliography{IEEEabrv,dynfood}

\end{document}